\documentclass[12pt]{amsart}

\usepackage[english]{babel}
\usepackage{mathrsfs,amssymb}
\usepackage{mathtools}
\usepackage[colorlinks, citecolor = blue]{hyperref}

\usepackage[shortlabels]{enumitem}
\setlist[itemize]{leftmargin=25pt}
\setlist[enumerate]{leftmargin=25pt}

\usepackage[T1]{fontenc}
\usepackage{lmodern}
\usepackage{tikz}

\definecolor{restartcolor}{RGB}{185,45,45}
\definecolor{continuationcolor}{RGB}{35,95,175}

\newtheorem{theorem}{Theorem}[section]
\newtheorem{lemma}[theorem]{Lemma}

\newtheorem{cor}[theorem]{Corollary}

\theoremstyle{definition}

\newtheorem{con}[theorem]{Conjecture}

\theoremstyle{remark}
\newtheorem{remark}[theorem]{Remark}

\numberwithin{equation}{section}
\usepackage[colorinlistoftodos,prependcaption,textsize=small]{todonotes}

\newcommand{\N}{\ensuremath{\mathbb{N}}}

\newcommand{\R}{\ensuremath{\mathbb{R}}}

\renewcommand{\P}{\ensuremath{\mathbb{P}}}

\newcommand{\mc}{\mathcal}
\newcommand{\ms}{\mathscr}

\let \e=\varepsilon
\let \d=\delta

\let \O=\Omega

\let \ga=\gamma

\allowdisplaybreaks

\begin{document}
\title[Dual boundedness of the maximal operator]
{Dual boundedness of the maximal operator on concave Banach function spaces}

\author[A.K. Lerner]{Andrei K. Lerner}
\address[A.K. Lerner]{Department of Mathematics,
Bar-Ilan University, 5290002 Ramat Gan, Israel}
\email{lernera@math.biu.ac.il}

\thanks{The author was supported by ISF grant no. 1035/21.}

\begin{abstract}
We prove a conjecture of Nieraeth asserting that if the Hardy--Littlewood maximal operator $M$ is bounded on an $s$-concave Banach function space $X$ for some $s\in (1,\infty)$, then $M$ is also bounded on the associate space $X'$. The proof is based on a new argument establishing the Fefferman--Stein inequality on $X$.
\end{abstract}

\keywords{Maximal operator, Banach function spaces, Fefferman--Stein inequality.}
\subjclass[2020]{42B25, 46E30}

\maketitle

\section{Introduction}
Define the Hardy--Littlewood maximal operator on $\mathbb R^n$ by
$$
Mf(x):=\sup_{Q\ni x}\frac1{|Q|}\int_Q |f|,
$$
where the supremum is taken over all cubes $Q\subset\mathbb R^n$ containing $x$.

Given a Banach function space $X$ on $\mathbb R^n$, we denote by $X'$ its associate space. In several important particular settings, boundedness of $M$ on $X$ is known to imply its boundedness on $X'$. This phenomenon is somewhat surprising: although $M$ can be linearized, the adjoint of a linearization is a different operator, so the implication cannot be explained by a direct duality argument.

A basic example is provided by weighted Lebesgue spaces. Let
$X=L^p(w), 1<p<\infty.$ By Muckenhoupt's theorem \cite{M72}, $M$ is bounded on $L^p(w)$ if and only if $w\in A_p$. Since
$w\in A_p\Leftrightarrow w^{1-p'}\in A_{p'}$ and $\bigl(L^p(w)\bigr)'=L^{p'}(w^{1-p'})$, it follows that
$$M:L^p(w)\to L^p(w)\Rightarrow M:L^{p'}(w^{1-p'})\to L^{p'}(w^{1-p'}).$$
Thus, in this setting, the dual boundedness property follows directly from the $A_p$ characterization.

The same phenomenon also occurs in variable Lebesgue spaces. Suppose that
$X=L^{p(\cdot)}(\mathbb R^n),1<p_-\leq p_+<\infty$. Diening \cite{D05} proved that
$$M:L^{p(\cdot)}\to L^{p(\cdot)}\Rightarrow M:L^{p'(\cdot)}\to L^{p'(\cdot)}.$$
Since $\bigl(L^{p(\cdot)}\bigr)'=L^{p'(\cdot)}$, this is precisely the transfer of boundedness from a space to its associate.
The argument here, however, relies on a characterization of a different nature, since the natural variable-exponent analogue of the $A_p$ condition is not sufficient.

In both examples, the dual boundedness property is obtained from a characterization specific to the corresponding class of spaces. This does not explain whether the implication
\begin{equation}\label{mimp}
M: X\to X \quad\Rightarrow\quad M: X'\to X'
\end{equation}
can instead be deduced from intrinsic geometric properties of $X$.

There are several reasons why such a general principle would be useful. For example, boundedness of the maximal operator on both a space and its associate plays an essential role in extrapolation on Banach and quasi-Banach function spaces; see \cite{CU17,LoN24,N23}. In particular, Rubio de Francia extrapolation for Banach function spaces implies that if an operator is bounded on $L^p(w)$ for every $w\in A_p$, at some $p>1$, then it is also bounded on $X$, provided that $M$ is bounded on both $X$ and $X'$; see, for instance, \cite{CU17}. A principle of the form \eqref{mimp} would therefore simplify the hypotheses of such results and facilitate their application to concrete function spaces.

In \cite{L10}, the boundedness of $M$ on $X'$ was characterized in terms of the Fefferman--Stein inequality \cite{FS72} on $X$.
Define the sharp maximal operator by
$$f^{\#}(x):=\sup_{Q\ni x}\frac{1}{|Q|}\int_Q |f-f_Q|,\qquad f_Q:=\frac{1}{|Q|}\int_Q f.$$
Let $S_0(\mathbb R^n)$ denote the space of measurable functions $f$ on $\mathbb R^n$ such that
$$
|\{x\in\mathbb R^n:|f(x)|>\alpha\}|<\infty
\qquad
\text{for every }\alpha>0.
$$

\begin{theorem}[\cite{L10}]\label{fschar}
Assume that $M$ is bounded on a Banach function space $X$. Then $M$ is bounded on $X'$ if and only if there exists a constant $C>0$ such that
\begin{equation}\label{FS}
\|f\|_X\leq C\|f^{\#}\|_X
\end{equation}
for every $f\in S_0(\mathbb R^n)$.
\end{theorem}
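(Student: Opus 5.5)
Both directions go through duality and the Lorentz--Luxemburg identity $\|f\|_X=\sup\{\int|fg|:\|g\|_{X'}\le1\}$, with the Rubio de Francia algorithm building $A_1$ weights. The forward implication is a direct argument. The converse is the substantial one.

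\emph{($M$ bounded on $X'$ $\Rightarrow$ \eqref{FS}).} Fix $f\in S_0$ and $g\ge0$ with $\|g\|_{X'}\le1$. Define
$$Rg:=\sum_{k\ge0}\frac{M^kg}{(2\|M\|_{X'})^k}.$$
Then $g\le Rg$, $\|Rg\|_{X'}\le2$, and $M(Rg)\le 2\|M\|_{X'}Rg$, so $Rg\in A_1$ with constant $\lesssim\|M\|_{X'}$. The weighted Fefferman--Stein inequality for $A_\infty$ weights gives $\int|f|\,Rg\le C\int f^{\#}Rg$ for $f\in S_0$. Here the usual qualitative finiteness hypothesis is removed by truncation: $S_0$ is exactly the class where $M_\delta$-type good-$\lambda$ arguments start, via $\min(|f|,N)\chi_{\{|f|>\epsilon\}}$ and a limiting argument. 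The constant depends only on the $A_1$ constant. Hence
$$\int|f|g\le C\int f^{\#}Rg\le 2C\|f^{\#}\|_X.$$
Taking the supremum over $g$ gives \eqref{FS}.

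\emph{(\eqref{FS} $\Rightarrow$ $M$ bounded on $X'$).} By duality it suffices to show
$$\int (Mg)\,h\lesssim \|g\|_{X'}\|h\|_X \qquad\text{for } g,h\ge0.$$
Reduce to the dyadic maximal operator $M^d$ over finitely many shifted lattices and to finitely supported data by monotone convergence. Linearize $M^dg$ through a sparse family: $M^dg\lesssim\sum_{Q\in\mathcal S}g_Q\chi_{E_Q}$ with $E_Q\subset Q$ pairwise disjoint and $|E_Q|\ge\frac12|Q|$. Then
$$\int (Mg)h\lesssim\sum_{Q\in\mathcal S}g_Q\int_{E_Q}h=\int g\,\Phi,\qquad \Phi:=\sum_{Q\in\mathcal S}\Bigl(\frac1{|Q|}\int_{E_Q}h\Bigr)\chi_Q,$$
so it suffices to prove $\|\Phi\|_X\lesssim\|h\|_X$. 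Since $M$ is bounded on $X$, one may first replace $h$ by its Rubio de Francia majorant in $A_1$. The key step is then to apply \eqref{FS} to a suitable truncation of $\Phi$, which lies in $S_0$ when $h$ has finite support and $\mathcal S$ is finite, after subtracting nothing, since truncations vanish at infinity. This reduces the task to the pointwise estimate
$$\Phi^{\#}\lesssim Mh.$$
To prove it, fix a cube $P$ and split $\mathcal S$ into cubes contained in $P$ (a local part) and cubes containing $P$, which are constant on $P$ and contribute nothing to the oscillation. The local part has $L^1(P)$ average at most $\frac1{|P|}\sum_{Q\subset P}\int_{E_Q}h\le h_P\le\inf_P Mh$, by disjointness of the $E_Q$. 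Therefore $\|\Phi\|_X\le C\|\Phi^{\#}\|_X\lesssim\|Mh\|_X\lesssim\|h\|_X$.

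I expect the main obstacle to be the justification that $\Phi$, or its truncations, lies in $S_0$ and that \eqref{FS} passes to the limit via the Fatou property. I would handle this by working with finite sparse families and bounded compactly supported $h$, then using monotone convergence together with the Fatou property of $X$.
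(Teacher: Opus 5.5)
The paper does not prove Theorem~\ref{fschar}; it quotes it from \cite{L10}. The only related argument is the Appendix, and your converse direction is essentially that argument: linearize $M^{\mathscr D}$ by a sparse family, pass to the adjoint $\Phi=T_{\mathcal S}^{\star}h$, and bound its sharp function by $Mh$.

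\textbf{The gap is in that last step.} The hypothesis \eqref{FS} uses the full sharp function $f^{\#}$, a supremum over \emph{all} cubes $P$. Your dichotomy (every $Q\in\mathcal S$ meeting $P$ is either contained in $P$ or contains $P$) holds only when $P\in\mathscr D$. So your computation proves only $\Phi^{\#}_{\mathscr D}\le 2Mh$, which is exactly the Appendix estimate $(T_{\mathcal S}^{\star}g)^{\#}_{\mathscr D}\le 2Mg$. Since $f^{\#}_{\mathscr D}\le f^{\#}$, this bound cannot be fed into \eqref{FS}. The paper can get away with the dyadic bound only because its main proof yields the stronger dyadic inequality \eqref{ofs} in every lattice. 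The non-dyadic bound you claim is in fact false. In $\mathbb R$ take $\mathcal S=\{[0,4^k)\}_{k=0}^{K}$, with $E_{[0,1)}=[0,1)$ and $E_{[0,4^k)}=[4^{k-1},4^k)$ for $k\ge1$, and let $h=\chi_{[0,4^K)}$. Then $\Phi=1+\frac34K$ on $[0,1)$ and $\Phi=0$ on $[-1,0)$. Hence $\Phi^{\#}\ge\frac12(1+\frac34K)$ on $[-1,1)$, while $Mh\le1$. Chains of this type do occur as stopping families, for instance for power singularities $g(x)=x^{-a}$ at the origin. As written, the converse half is therefore not proved.

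\textbf{A repair.} Smooth the indicators at the scale of each cube, so that large cubes cannot create oscillation across their boundaries. For finite $\mathcal S$ set $\alpha_Q:=\frac1{|Q|}\int_{E_Q}h$, $\phi_Q(x):=\max\{0,1-\mathrm{dist}(x,Q)/\ell(Q)\}$, and $\tilde\Phi:=\sum_{Q\in\mathcal S}\alpha_Q\phi_Q\ge\Phi$. Fix an arbitrary cube $P$.
\begin{itemize}
\item Small cubes: those $Q$ with $\ell(Q)\le\ell(P)$ and $3Q\cap P\ne\emptyset$ lie in $5P$. By disjointness of the $E_Q$, they contribute at most $\frac{3^n}{|P|}\int_{5P}h\le 15^n\inf_P Mh$ to $\frac1{|P|}\int_P\tilde\Phi$.
\item Large cubes: for each scale $2^k>\ell(P)$, at most $C_n$ cubes have $3Q\cap P\ne\emptyset$. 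Each satisfies $P\subset 5Q$, so $\alpha_Q\le 5^n\inf_P Mh$. Each $\phi_Q$ varies on $P$ by at most $\sqrt n\,\ell(P)2^{-k}$, so summing the geometric series shows these terms oscillate on $P$ by $\lesssim\inf_P Mh$.
\end{itemize}
Thus $\tilde\Phi^{\#}\lesssim Mh$, and $\tilde\Phi$ is bounded with compact support. This gives $\|\Phi\|_X\le\|\tilde\Phi\|_X\le C\|\tilde\Phi^{\#}\|_X\lesssim\|h\|_X$, which closes your argument. Replacing $h$ by a Rubio de Francia majorant in this direction is unnecessary.

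\textbf{The forward direction.} Your route (Rubio de Francia in $X'$, the $A_\infty$-weighted Fefferman--Stein inequality, then Lorentz--Luxemburg duality) is standard and correct in substance; the paper does not treat this direction at all. However, the weighted inequality $\int|f|w\lesssim\int f^{\#}w$ for all $f\in S_0$ should be justified properly. Either take it from the local mean oscillation (sparse) decomposition, or prove it after a localization as in Lemma~\ref{appr}. Truncating to $\min(|f|,N)\chi_{\{|f|>\epsilon\}}$ does not by itself supply the a priori finiteness that a good-$\lambda$ argument needs at $p=1$: already $\int_{\mathbb R}M\chi_{[0,1]}=\infty$ when $w\equiv1$.
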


Theorem~\ref{fschar} reformulates \eqref{mimp}, but does not by itself identify geometric conditions on $X$ under which the implication holds. Indeed, a direct proof of \eqref{FS} may be no easier than proving boundedness of $M$ on $X'$. For instance, in the variable-exponent setting, establishing
$M:L^{p(\cdot)}\to L^{p(\cdot)}\Rightarrow\eqref{FS}$
directly encounters essentially the same difficulties as Diening's dual boundedness theorem.

The following conjecture of Nieraeth \cite{N26} identifies $s$-concavity as the natural intrinsic condition under which \eqref{mimp} should hold.

\begin{con}[\cite{N26}]\label{conj:Nieraeth}
Let $1<s<\infty$, and let $X$ be an $s$-concave Banach function space over $\mathbb R^n$. If $M$ is bounded on $X$, then $M$ is bounded on $X'$.
\end{con}

The concavity assumption is essential, as shown by the elementary example $X=L^\infty$. On the other hand, weighted and variable Lebesgue spaces fall within the scope of the conjecture: $L^p(w)$ is $p$-concave, while $L^{p(\cdot)}$, under the assumption
$
1<p_-\leq p_+<\infty,
$
is $s$-concave for every $s\ge p_+$. Thus Conjecture~\ref{conj:Nieraeth} would recover both of the examples above without appealing to the Muckenhoupt condition in the weighted case or to Diening's theorem in the variable-exponent case.

The validity of Conjecture \ref{conj:Nieraeth} is far from evident: boundedness of the maximal operator on a Banach function space and on its associate is generally governed by rather different mechanisms. In fact, our first attempts were directed toward constructing a counterexample, as it seemed plausible that $s$-concavity alone might be too weak. The failure of these attempts gradually suggested that the conjecture captures the correct structural principle.

Our main result proves the conjecture by establishing the Fefferman--Stein inequality on every $s$-concave Banach function space $X$ on which $M$ is bounded.

\begin{theorem}\label{thm:main}
Let $1<s<\infty$, and let $X$ be an $s$-concave Banach function space over~$\mathbb R^n$. Assume that $M$ is bounded on $X$. Then there exists a constant $C>0$ such that
$$
\|f\|_X\le C\|f^{\#}\|_X,\quad\bigl(f\in S_0(\mathbb R^n)\bigr).
$$
\end{theorem}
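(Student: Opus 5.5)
The plan is to reduce the Fefferman--Stein inequality to a sparse estimate in which the sharp function enters only through its infimum over cubes. I then want to control that sparse sum using only the boundedness of $M$ on $X$ and $s$-concavity, and never boundedness on $X'$, since that would be circular in view of Theorem~\ref{fschar}. I expect the last step to be the only hard one.

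\textbf{Step 1 (pointwise domination).} For $f\in S_0(\mathbb R^n)$ I will use the local mean oscillation decomposition, taken as a standard tool from outside this paper. It gives a sparse family $\mathcal S$ with
$$|f(x)|\le C_n\sum_{Q\in\mathcal S}\omega_\lambda(f;Q)\chi_Q(x)\quad\text{a.e.}$$
Here $\omega_\lambda(f;Q)$ is the $\lambda$-oscillation, whose definition I will also take from outside the paper. It satisfies
$$\omega_\lambda(f;Q)\le \lambda^{-1}\frac1{|Q|}\int_Q|f-f_Q|\le \lambda^{-1}\inf_Q f^{\#},$$
because the mean oscillation over $Q$ is at most $f^{\#}(y)$ for every $y\in Q$. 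Writing $g=f^{\#}$ and $a_Q=\inf_Q g$, it therefore suffices to show
$$\Bigl\|\sum_{Q\in\mathcal S}a_Q\chi_Q\Bigr\|_X\le C\|g\|_X.$$
I emphasize that this must be proved only for coefficients $a_Q\le\inf_Q g$, not for general averages $g_Q$. The latter would be equivalent to boundedness of a sparse operator on $X$, and that requires $M$ on $X'$.

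\textbf{Step 2 (level decomposition).} Let $\mathcal S_k=\{Q\in\mathcal S: 2^k\le a_Q<2^{k+1}\}$. Every $Q\in\mathcal S_k$ lies in $\Omega_k:=\{g\ge 2^k\}$. Let $N_k=\sum_{Q\in\mathcal S_k}\chi_Q$ be the counting function of this level. Since $\mathcal S_k$ is sparse, $N_k$ is exponentially integrable on each of its cubes:
$$|\{x\in Q: N_k(x)>j\}|\le 2^{-cj}|Q|.$$
Consequently, if $F_{k,j}:=\{N_k>j\}\subset\Omega_k$, then on $F_{k,j}$ we have the pointwise bound
$$M\chi_{\Omega_k\setminus F_{k,j+m}}\ge c>0$$
for a fixed $m$. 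From this I get
$$\chi_{F_{k,j}}\le C\bigl(M\chi_{F_{k,j}\setminus F_{k,j+m}}\bigr)^{p}$$
for every $p$, with a constant of the form $C^{p}$. The target bound becomes
$$\sum_k 2^k N_k=\sum_k 2^k\sum_j\chi_{F_{k,j}}.$$

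\textbf{Step 3 (main obstacle: summing the layers in $X$).} This is where $s$-concavity must enter. The functions $\phi_{k,j}:=\chi_{F_{k,j}\setminus F_{k,j+1}}$ are disjointly supported in $j$. The sets $\Omega_k$ are nested, and $\sum_k 2^k\chi_{\Omega_k}\le 2g$.

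My first attempt will be to pass to the $s$-convexification. The space $X_{1/s}=\{h:|h|^{1/s}\in X\}$ is, up to renorming, a Banach lattice with an upper $1$-estimate, so it is essentially $L$-like on disjoint pieces. I would then combine this with the lattice-extrapolation fact that $M$ bounded on $X$ implies $M$ bounded on $X^{r}$ for some $r>1$. The hope is that the geometric decay $2^{-cj}$ in Step 2 can absorb the loss in $p$, provided $p$ is chosen close to $1$, and that in the end
$$\Bigl\|\sum_j\sum_k 2^k\chi_{F_{k,j}}\Bigr\|_X\lesssim\sum_j 2^{-\varepsilon j}\Bigl\|\sum_k 2^k\chi_{\Omega_k}\Bigr\|_X.$$

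If that route fails, the alternative I would try is a Rubio de Francia type construction. I would build an $A_1$-like majorant
$$Rh=\sum_i (2\|M\|_X)^{-i}M^ih$$
inside $X$ and use $s$-concavity to transfer the weighted estimate $\|\cdot\|_{L^s(w)}$ for $w\in A_\infty$ back to $X$. I expect all the real difficulty to lie in this step.
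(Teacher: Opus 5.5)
\paragraph{Verdict: the proof is incomplete at Step 3.}
Your Steps 1 and 2 are sound, with one small correction. The exponential bound $|\{x\in Q: N_k(x)>j\}|\le 2^{-cj}|Q|$ must be stated for the local count $\sum_{P\in\mathcal S_k,\,P\subseteq Q}\chi_P$. At $x\in F_{k,j}$, the cube to use is the $(j+1)$-st cube of $\mathcal S_k$ from the top of the chain through $x$. That cube lies inside $F_{k,j}$, which is what justifies replacing $\Omega_k$ by $F_{k,j}$ in your maximal-function lower bound.

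The genuine gap is Step 3, which carries the entire content of the theorem and contains no argument. The estimate with $2^{-\varepsilon j}$ is stated only as a hope. Your first route also misidentifies the structure involved:
\begin{itemize}
\item $\{h:|h|^{1/s}\in X\}$ is the $s$-\emph{concavification}, not a convexification.
\item For $s$-concave $X$ this space satisfies a \emph{lower} $1$-estimate, and in general it is only quasi-normed (for $X=L^p$, $p<s$, it is $L^{p/s}$).
\item An upper $1$-estimate holds in every normed lattice and gives nothing.
\item Neither powers $p>1$ nor any self-improvement of $M$ are needed.
\end{itemize}
The Rubio de Francia route is not developed. Making it work would require replacing the weights in a weighted-$L^s$ description of $\|\cdot\|_X$ by $A_\infty$ weights, which needs maximal bounds on an auxiliary dual-type space. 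That is exactly the information you do not have.

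\paragraph{The missing idea.}
What is missing is a short pruning argument; it is precisely the paper's Lemma~\ref{lem:pruning}, the only place where concavity enters. Put $U_j:=\sum_k 2^k\chi_{F_{k,j}}$ and
\[
W_j:=U_j-U_{j+m}=\sum_k 2^k\chi_{F_{k,j}\setminus F_{k,j+m}}\ge 0 .
\]
\begin{enumerate}
\item Since the coefficients are powers of $2$, $U_j\le 2\sup_k 2^k\chi_{F_{k,j}}$.
\item Step 2 with $p=1$ gives $MW_j\ge\frac12\sup_k 2^k\chi_{F_{k,j}}\ge\frac14 U_j$. Hence $\|U_j\|_X\le 4A\|W_j\|_X$, where $A=\|M\|_{X\to X}$.
\item After renorming $X$ so that its $s$-concavity constant is $1$,
\[
\|W_j\|_X^s+\|U_{j+m}\|_X^s\le\bigl\|(W_j^s+U_{j+m}^s)^{1/s}\bigr\|_X^s\le\|U_j\|_X^s .
\]
\item Therefore $\|U_{j+m}\|_X\le\rho\|U_j\|_X$ with $\rho=(1-(4A)^{-s})^{1/s}<1$.
\item Summing, $\sum_{j\ge0}\|U_j\|_X\le\frac{m}{1-\rho}\|U_0\|_X\le\frac{2m}{1-\rho}\|g\|_X$, which is the bound you need for $\sum_k 2^kN_k=\sum_j U_j$.
\end{enumerate}

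\paragraph{Comparison with the paper.}
With this step inserted, your argument becomes a valid alternative proof. It imports the Lerner--Nazarov sparse domination with coefficients $\le\inf_Q f^{\#}$ and works with level sets of $f^{\#}$. The paper instead stays self-contained: it runs a stopping tree for $M^d_{Q_0}f$ and uses restart/continuation cubes and coronas to produce the lacunary coefficients to which the same pruning lemma is applied.
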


By Theorem~\ref{fschar}, this result immediately yields the following.

\begin{cor}\label{positive}
Conjecture~\ref{conj:Nieraeth} is true.
\end{cor}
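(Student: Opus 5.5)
The corollary follows formally from the two results already stated, so the plan is simply to chain them. Let $1<s<\infty$ and let $X$ be an $s$-concave Banach function space over $\mathbb R^n$ on which $M$ is bounded; these are exactly the hypotheses of Conjecture~\ref{conj:Nieraeth}.

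\emph{Step 1.} These are also exactly the hypotheses of Theorem~\ref{thm:main}. Applying it gives a constant $C>0$ such that
$$
\|f\|_X\le C\|f^{\#}\|_X\qquad\bigl(f\in S_0(\mathbb R^n)\bigr),
$$
which is the Fefferman--Stein inequality \eqref{FS} on $X$.

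\emph{Step 2.} Since $M$ is bounded on $X$, the standing assumption of Theorem~\ref{fschar} holds. Its ``if'' direction, combined with Step 1, yields that $M$ is bounded on $X'$. This is the conclusion of Conjecture~\ref{conj:Nieraeth}.

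There is essentially no obstacle at the level of the corollary. The only point to check is that the class of test functions in Theorem~\ref{thm:main} matches the one in Theorem~\ref{fschar}, and both use $S_0(\mathbb R^n)$ with the same definition of $f^{\#}$. All the real difficulty lies in Theorem~\ref{thm:main}.

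For that theorem, I would use $s$-concavity to pass to the $s$-convexification. Write $\|g\|_X$ through a Lozanovskii/duality representation: $s$-concavity gives a norm comparable to $\sup\{(\int |g|^s w)^{1/s}\}$ over suitable weights $w$ in the unit ball of a space built from $(X^{1/s})'$. I would then run a Rubio de Francia algorithm with $M$ on $X$ to replace each such $w$ by a larger $A_\infty$-type weight, for instance an $A_1$ weight adapted to $X$ via the $s$-concavity. The classical weighted Fefferman--Stein inequality $\int |f|^s w\lesssim \int (f^{\#})^s w$ for $w\in A_\infty$ and $f\in S_0$ would then give the estimate.

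The hardest step is controlling the $A_\infty$ constant of the constructed weights uniformly. Only boundedness of $M$ on $X$ is assumed, not on its dual, so the iteration has to be performed on the $X$ side. I would try to exploit that $\|M\|_{X\to X}<\infty$ together with $s$-concavity makes $M$ act boundedly on an appropriate power of $X$.
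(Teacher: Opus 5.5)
Your Steps~1 and~2 are exactly the paper's argument: Theorem~\ref{thm:main} supplies the Fefferman--Stein inequality \eqref{FS} on $X$, and since $M$ is bounded on $X$, the ``if'' direction of Theorem~\ref{fschar} yields boundedness of $M$ on $X'$. Your closing sketch of how to prove Theorem~\ref{thm:main} is not needed for the corollary, and it should not be relied on. First, $s$-concavity gives an \emph{infimum} representation of $\|\cdot\|_X$ over weights, not a supremum; the supremum form comes from convexity. Second, making a Rubio de Francia construction work using only $M:X\to X$ is precisely the unresolved difficulty. The paper instead proves Theorem~\ref{thm:main} directly, with a dyadic stopping-tree/corona argument whose pruning lemma is where $s$-concavity enters.
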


Observe that we actually prove a slightly stronger statement than Theorem \ref{thm:main}: the Fefferman--Stein inequality holds with the dyadic sharp function $f^{\#}_{\mathscr D}$ on the right-hand side, where $\mathscr D$ is an arbitrary dyadic lattice. This stronger statement, in turn, yields the boundedness of $M$ on $X'$ by a rather simple argument. Thus, Conjecture \ref{conj:Nieraeth} can be proved without appealing to Theorem \ref{fschar}. To make the paper essentially self-contained, we outline this argument in the Appendix.

Several classical approaches to the Fefferman--Stein inequality are known. These include arguments based on good-$\lambda$ inequalities \cite{FS72} and on non-increasing rearrangements~\cite{BS79}. Such methods are not available in our setting, since $X$ need not be rearrangement-invariant. A modern approach based on sparse domination gives (see, for example, \cite{LN19})
$$
|f|
\lesssim
\sum_{Q\in\mathcal S}
\left(\frac1{|Q|}\int_Q|f-f_Q|\right)\chi_Q.
$$
However, the standard estimate of the resulting sparse operator naturally uses boundedness of $M$ on $X'$, which is precisely the conclusion one seeks to prove.

Our proof is entirely dyadic. We construct a stopping tree for the localized dyadic maximal operator and divide its cubes into restart and continuation cubes, according to the relation between the average over a cube and the corresponding local mean oscillation. We then organize the continuation cubes into coronas rooted at the restart cubes. A pruning estimate, in which the $s$-concavity of $X$ is used, produces geometric decay along these coronas and leads to the dyadic Fefferman--Stein inequality.

A particularly striking and immediate application of our main result concerns weighted variable Lebesgue spaces. Let $p:\mathbb R^n\to(1,\infty)$ be measurable and suppose that
$$
1<p_-:=\operatorname*{ess\,inf}_{\mathbb R^n}p \leq \operatorname*{ess\,sup}_{\mathbb R^n}p=:p_+<\infty.
$$
Given a weight $w$, define
$$\|f\|_{L_w^{p(\cdot)}}:=\|fw\|_{L^{p(\cdot)}}.$$
It was previously known \cite{L17} that boundedness of $M$ on $L_w^{p(\cdot)}$ implies boundedness on its associate $L_{w^{-1}}^{p'(\cdot)}$ under the additional assumption
$w(\cdot)^{p(\cdot)}\in A_\infty.$
This condition was used at several essential stages of the proof, and it was not even clear whether it could be weakened. Since $L_w^{p(\cdot)}$ is $p_+$-concave, the affirmative resolution of Conjecture \ref{conj:Nieraeth}  removes this assumption entirely and yields the following result.

\begin{cor}
\label{cor:weighted-variable}
Let $p:\mathbb R^n\to(1,\infty)$ be measurable with
$
1<p_-\leq p_+<\infty,
$
and let $w$ be an arbitrary weight. If the Hardy--Littlewood maximal operator is bounded on $L_w^{p(\cdot)}$, then it is bounded on $L_{w^{-1}}^{p'(\cdot)}$.
\end{cor}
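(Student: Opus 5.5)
We deduce this from Corollary~\ref{positive}, that is, from Conjecture~\ref{conj:Nieraeth} (now proved). We take $X=L_w^{p(\cdot)}$ and $s=p_+$. Three facts must be checked: $X$ is a Banach function space, $X$ is $p_+$-concave, and $X'=L_{w^{-1}}^{p'(\cdot)}$ up to equivalence of norms. The boundedness of $M$ on $X'$ then follows at once. We work under the standing convention, implicit in the statement, that the weight $w$ is such that $L_w^{p(\cdot)}$ is a Banach function space; for instance $w,w^{-1}$ locally in $L^{p(\cdot)}$ and $L^{p'(\cdot)}$ respectively.

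\emph{Associate space.} The map $f\mapsto fw$ is an isometry of $L_w^{p(\cdot)}$ onto $L^{p(\cdot)}$. We then use the classical identification $(L^{p(\cdot)})'=L^{p'(\cdot)}$ with equivalent norms, which holds for $1<p_-\le p_+<\infty$ via the generalized Hölder inequality and the norm-conjugate formula. From these,
$$
\sup_{\|g\|_{L_w^{p(\cdot)}}\le1}\int|fg|=\sup_{\|gw\|_{L^{p(\cdot)}}\le1}\int|fw^{-1}|\,|gw|\simeq\|fw^{-1}\|_{L^{p'(\cdot)}}.
$$
This gives $X'=L_{w^{-1}}^{p'(\cdot)}$ with equivalent norms. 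Since boundedness of $M$ is insensitive to passing to an equivalent norm, the conclusion will follow.

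\emph{Concavity.} Multiplication by $w$ commutes with the lattice operation $(\sum|f_i|^s)^{1/s}$. Hence it suffices to prove that $L^{p(\cdot)}$ is $s$-concave for $s=p_+$, i.e.
$$
\Bigl\|\Bigl(\sum_i|f_i|^s\Bigr)^{1/s}\Bigr\|_{L^{p(\cdot)}}\ge\Bigl(\sum_i\|f_i\|_{L^{p(\cdot)}}^s\Bigr)^{1/s}.
$$
Write $\|F\|_{L^{p(\cdot)}}^s=\||F|^s\|_{L^{q(\cdot)}}$ with $q=p/s\le1$, where the right-hand side is the Luxemburg functional. The claim then reduces to superadditivity of this functional on nonnegative functions when $q\le1$, namely $\|g+h\|_{L^{q(\cdot)}}\ge\|g\|_{L^{q(\cdot)}}+\|h\|_{L^{q(\cdot)}}$.

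To prove superadditivity, normalize so that $\|g\|=a$, $\|h\|=b$, and use the modular $\rho(u)=\int u^{q(x)}\,dx$. Since $p_+<\infty$, we have $\rho(g/a)=\rho(h/b)=1$. Pointwise concavity of $t\mapsto t^{q(x)}$ gives
$$
\Bigl(\frac{g+h}{a+b}\Bigr)^{q}\ge\frac{a}{a+b}\Bigl(\frac ga\Bigr)^{q}+\frac{b}{a+b}\Bigl(\frac hb\Bigr)^{q},
$$
and integrating yields $\rho\bigl((g+h)/(a+b)\bigr)\ge1$. Hence $\|g+h\|\ge a+b$. Induction extends this to finite sums, and monotone convergence passes to countable sums.

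The only delicate step is this concavity verification, specifically handling the Luxemburg functional when $q$ may dip below $1$ and checking that the normalizations $\rho(\cdot)=1$ are legitimate; this is where $p_+<\infty$ is used. With the three facts in hand, Corollary~\ref{positive} applies with $s=p_+\in(1,\infty)$ and gives the boundedness of $M$ on $L_{w^{-1}}^{p'(\cdot)}$.
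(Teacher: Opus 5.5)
Your proposal is correct and follows the paper's route: the paper observes that $L^{p(\cdot)}_w$ is $p_+$-concave, uses that its associate space is $L^{p'(\cdot)}_{w^{-1}}$, and invokes Corollary~\ref{positive}. You additionally verify these two facts, and your superadditivity argument for the Luxemburg functional with exponent $p/p_+\le 1$ is sound. Your ``standing convention'' on $w$ is unnecessary: under the paper's axioms (ideal, Fatou, saturation), $L^{p(\cdot)}_w$ is a Banach function space for every weight with $0<w<\infty$ a.e., so you need no restriction on $w$.
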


The paper is organized as follows. Section 2 contains the necessary preliminary material. In Section 3 we prove Theorem \ref{thm:main}. Section~4 is devoted to applications of our main result. These include consequences for Musielak--Orlicz spaces and weighted Morrey and block spaces, and applications to extrapolation on Banach function spaces. Some of these consequences were already pointed out by Nieraeth \cite{N26} under the assumption that Conjecture~\ref{conj:Nieraeth} holds. Finally, Section 5 is an appendix in which we outline a direct proof of Conjecture~\ref{conj:Nieraeth}, avoiding the characterization given by Theorem \ref{fschar}.

\section{Preliminaries}
\subsection{Banach function spaces}
Let $L^0({\mathbb R}^n)$ denote the space of measurable functions on~${\mathbb R}^n$. A vector space $X\subseteq L^0({\mathbb R}^n)$ equipped with a norm $\|\cdot\|_X$
is called a Banach function space over ${\mathbb R}^n$ if it satisfies the following properties:
\begin{itemize}
\item {\it Ideal property}: If $f\in X$ and $g\in L^0({\mathbb R}^n)$ with $|g|\le |f|$, then $g\in X$ and $\|g\|_X\le \|f\|_{X}$.
\item {\it Fatou property}: If $0\le f_j\uparrow f$ for $\{f_j\}$ in $X$ and $\sup_j\|f_j\|_{X}<\infty$, then $f\in X$ and $\|f\|_{X}=\sup_j\|f_j\|_{X}$.
\item {\it Saturation property}: For every measurable set $E\subset {\mathbb R}^n$ of positive measure, there exists a measurable subset $F\subseteq E$ of positive measure
such that $\chi_F\in X$.
\end{itemize}

We refer to \cite{LN24}, where, in particular, one can find a discussion of
this choice of axioms.

It is well known (see, for example, \cite[Lemma~3.5]{LN24}) that the Fatou
property is equivalent to the following version of Fatou's lemma: for every
sequence of nonnegative  $f_j\in X$ with
$\liminf_{j\to\infty}\|f_j\|_X<\infty$, one has
$\liminf_{j\to\infty}f_j\in X$ and
$$
\left\|\liminf_{j\to\infty}f_j\right\|_X
\leq
\liminf_{j\to\infty}\|f_j\|_X.
$$

Given a Banach function space $X$, its associate space $X'$ consists of all
$f\in L^0(\R^n)$ such that
$$
\|f\|_{X'}:=\sup_{\|g\|_X\le 1}\int_{\R^n}|fg|<\infty.
$$

A Banach function space $X$ is called $s$-concave, $0<s<\infty$, if there
exists a constant $C_s\geq1$ such that, for every $N\in\N$ and every
$h_1,\dots,h_N\in X$,
$$
\left(\sum_{k=1}^N\|h_k\|_X^s\right)^{1/s}
\leq
C_s\left\|\left(\sum_{k=1}^N|h_k|^s\right)^{1/s}\right\|_X.
$$

\subsection{Dyadic lattices} Given a cube $Q\subset {\mathbb R}^n$, denote by ${\mathcal D}(Q)$ the set of all dyadic cubes with respect to $Q$, that is, the cubes obtained by repeated subdivision of $Q$ and each of its descendants into $2^n$ congruent subcubes.

Following \cite[Def. 2.1]{LN19}, a dyadic lattice ${\mathscr D}$ in ${\mathbb R}^n$ is any collection of cubes such that
\begin{enumerate}
\renewcommand{\labelenumi}{(\roman{enumi})}
\item Any child of $Q\in{\mathscr D}$ is in ${\mathscr D}$ as well, i.e. $\mc{D}(Q) \subseteq \ms{D}$.
\item
Any $Q',Q''\in {\mathscr D}$ have a common ancestor, i.e. there exists a $Q\in{\mathscr D}$ such that $Q',Q''\in {\mathcal D}(Q)$.
\item
For every compact set $K\subset {\mathbb R}^n$, there exists a cube $Q\in {\mathscr D}$ containing $K$.
\end{enumerate}

\begin{remark}\label{rem}
In particular, by (iii), one can choose an increasing chain of cubes $Q_j\in {\mathscr D}$ such that $Q_j\uparrow {\mathbb R}^n$.
\end{remark}

\subsection{The dyadic maximal operator}
Given a cube $Q_0$, consider the local dyadic maximal operator
$$M^d_{Q_0}f(x):=\sup_{\substack{Q\in{\mathcal D}(Q_0)\\x\in Q}}\frac{1}{|Q|}\int_Q|f|.$$

The following result is well known and we include its proof for the sake of completeness.

\begin{lemma}\label{dyad} For any nonnegative and integrable function $f$ on $Q_0$ and every $\tau>1$, there exists a family ${\mathcal S}\subset {\mathcal D}(Q_0)$
such that
\begin{enumerate}[(i)]
\item if $R,Q\in\mathcal S$ and $R\subsetneq Q$, then $f_R>\tau f_Q$;
\item for almost every $x\in Q_0$,
\begin{equation}\label{spb}
M^d_{Q_0}f(x)\le \tau\sum_{Q\in {\mathcal S}}f_Q\chi_{E_Q}(x),
\end{equation}
where the sets $E_Q\subset Q$ are pairwise disjoint.
\end{enumerate}
\end{lemma}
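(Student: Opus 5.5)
The plan is to run the standard Calderón--Zygmund stopping-time construction at geometrically growing levels. I would define $\mathcal S$ generation by generation. Put $\mathcal S_0:=\{Q_0\}$. Given a cube $Q\in\mathcal S_k$, its children in $\mathcal S_{k+1}$ are the maximal cubes $R\in\mathcal D(Q)$ with $f_R>\tau f_Q$. Each $R\subsetneq Q$ is then well defined: every chain of strict dyadic subcubes of $Q$ has a maximal element, and $Q$ itself does not qualify because $\tau>1$. If $f_Q=0$, then $f=0$ a.e. on $Q$ and no $R$ is selected. Finally set $\mathcal S:=\bigcup_k\mathcal S_k$.

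For property (i), take $R,Q\in\mathcal S$ with $R\subsetneq Q$. Cubes selected inside a fixed $Q$ are disjoint, so the generations are nested and $R$ lies in the tree below $Q$. There is then a chain $Q=P_0\supsetneq P_1\supsetneq\dots\supsetneq P_m=R$ with $m\ge1$, each $P_{i+1}$ a stopping child of $P_i$. Iterating gives $f_R>\tau^m f_Q\ge \tau f_Q$.

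For property (ii), set $E_Q:=Q\setminus\bigcup_{R\in\operatorname{ch}_{\mathcal S}(Q)}R$. These sets are pairwise disjoint, because each point belongs to a nested chain of selected cubes and lies in $E_Q$ only for the smallest cube of that chain.

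Next I need the chain through a.e. point to be finite, i.e. that a.e. $x$ lies in some $E_Q$. Let $\Omega_k:=\bigcup_{Q\in\mathcal S_k}Q$.
\begin{itemize}
\item Each $R\in\mathcal S_{k+1}$ with parent $Q$ satisfies $|R|<\frac{1}{\tau f_Q}\int_R f$.
\item Summing over $R$ gives $|\Omega_{k+1}\cap Q|\le \tau^{-1}|Q|$.
\item Hence $|\Omega_k|\le\tau^{-k}|Q_0|\to0$, so a.e. $x$ lies in only finitely many selected cubes.
\end{itemize}

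Now fix such an $x$ and let $Q$ be the smallest selected cube containing it, so that $x\in E_Q$. Take any $P\in\mathcal D(Q_0)$ with $x\in P$, and let $Q'$ be the smallest selected cube containing $P$. This is possible because $Q_0$ contains $P$ and the selected cubes containing $P$ form a chain. Since $x\in P\subseteq Q'$ and $Q$ is the smallest selected cube containing $x$, we get $Q\subseteq Q'$.

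I then split into two cases according to whether $f_P>\tau f_{Q'}$.
\begin{itemize}
\item If $f_P>\tau f_{Q'}$, then $P$ lies inside a maximal such cube, which is a selected child of $Q'$ containing $P$. This contradicts the minimality of $Q'$.
\item Therefore $f_P\le \tau f_{Q'}$.
\end{itemize}

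It remains to replace $Q'$ by $Q$, and this is the main obstacle. Averages increase along the chain, so $f_{Q'}\le f_Q$ whenever $Q\subseteq Q'$. Hence $f_P\le\tau f_{Q'}\le\tau f_Q$. Taking the supremum over $P$ gives $M^d_{Q_0}f(x)\le\tau f_Q=\tau\sum_{Q\in\mathcal S}f_Q\chi_{E_Q}(x)$, which is \eqref{spb}.
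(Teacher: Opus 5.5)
Your proof is correct and follows the paper's argument essentially step by step. It uses the same stopping construction with maximal cubes satisfying $f_R>\tau f_Q$, the same bound $\tau^{-k}|Q_0|$ on the $k$-th generation, the same sets $E_Q$, and the same maximality-plus-monotonicity argument for \eqref{spb}. The only difference is cosmetic: you merge the paper's two cases ($P\subseteq Q$ and $Q\subsetneq P$) by passing to the smallest stopping cube $Q'$ containing $P$, which is $Q$ itself in the first case and the paper's $Q_{j-1}$ in the second.
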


\begin{proof} We construct a family of stopping cubes
${\mathcal S}\subset\mathcal D(Q_0)$ recursively as follows.
Put $Q_0\in{\mathcal S}$. Once $Q\in{\mathcal S}$ has been selected, let
the stopping children of $Q$ (which we denote by $\text{ch}_{\mathcal S}(Q)$) be the collection of maximal dyadic cubes
$P\in {\mathcal D}(Q)$ such that $f_P>\tau f_Q$, and add all such cubes to~$\mathcal S$. Observe that property (i) holds trivially.

Since the stopping children are maximal, they are pairwise disjoint, and hence
$$\sum_{P\in \text{ch}_{\mathcal S}(Q)}|P|\le \frac{1}{\tau}|Q|.$$
From this the union of the cubes in the $k$-th stopping generation has measure at
most $\tau^{-k}|Q_0|$. It follows that the set of points belonging to an infinite descending chain
of stopping cubes has measure zero. Thus, setting for $Q\in {\mathcal S}$,
$$E_Q:=Q\setminus \bigcup_{P\in \text{ch}_{\mathcal S}(Q)}P,$$
we obtain that the sets $E_Q$ are pairwise disjoint and they cover $Q_0$ up to
a null set.

Let us prove (\ref{spb}). Assume that $x\in E_Q$, and let $x\in R\in {\mathcal D}(Q_0)$. Suppose first that $R\subset Q$.
If $f_R>\tau f_Q$, then $R$ is contained in one of the maximal cubes
$P\in\text{ch}_{\mathcal S}(Q)$. This would imply
$x\in P$, contrary to $x\in E_Q$. Therefore, $f_R\le\tau f_Q$.

Suppose now that $Q\subsetneq R$. Let
$$
Q=Q_m\subsetneq Q_{m-1}\subsetneq\cdots\subsetneq Q_0
$$
be the chain of stopping ancestors of $Q$. Choose $j$ so that $Q_j\subsetneq R\subseteq Q_{j-1}$.
Since $Q_j$ is maximal among the dyadic subcubes $P\subsetneq Q_{j-1}$ satisfying $f_P>\tau f_{Q_{j-1}}$, we have
$$
f_R\leq\tau f_{Q_{j-1}}<f_{Q_j}\leq f_Q.
$$
Unifying both considered cases implies
$$M^d_{Q_0}f(x)\le \tau f_Q,\quad x\in E_Q,$$
which proves (\ref{spb}).
\end{proof}

\subsection{An approximation result} Fix a dyadic lattice ${\mathscr D}$. Consider the dyadic sharp maximal operator
$$f_{\mathscr D}^{\#}(x):=\sup_{Q\ni x, Q\in {\mathscr D}}\frac{1}{|Q|}\int_Q|f-f_Q|.$$

\begin{lemma}\label{appr}
Let $f\in S_0({\mathbb R}^n)\cap L^\infty({\mathbb R}^n)$.  There are bounded compactly
supported functions $f_j$ such that for all $x\in {\mathbb R}^n$,
\begin{equation}\label{first}
\lim_{j\to \infty}f_j(x)=f(x),
\end{equation}
and
\begin{equation}\label{second}
(f_j)_{\mathscr D}^{\#}(x)\le 2f_{\mathscr D}^{\#}(x).
\end{equation}
\end{lemma}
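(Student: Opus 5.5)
We use the standard truncation device: given $f\in S_0\cap L^\infty$, define
$$
f_j:=\bigl(f-\Phi_j\bigr)\chi_{\{|f|>\lambda_j\}}\ \text{-type truncation},
$$
more concretely $f_j:=\psi_{\lambda_j}(f)\chi_{Q_j}$ is \emph{not} what we want, since cutting by $\chi_{Q_j}$ may destroy the oscillation bound. Instead we cut in the range. For $\lambda>0$ let $\psi_\lambda:\mathbb R\to\mathbb R$ be the odd soft-threshold map
$$
\psi_\lambda(t):=\operatorname{sign}(t)\,(|t|-\lambda)_+ .
$$
Set $f_j:=\psi_{1/j}\circ f$. (If $f$ is complex-valued, treat real and imaginary parts separately, or use $\psi_\lambda(z)=z(1-\lambda/|z|)_+$, which is still $1$-Lipschitz; this is where the factor $2$ comes from.)

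The properties are then checked in order.

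\textbf{Boundedness and support.} We have $|f_j|\le |f|\le\|f\|_\infty$, and $f_j$ vanishes outside $\{|f|>1/j\}$, a set of finite measure because $f\in S_0$. This set is not necessarily bounded, however. To get compact support we additionally multiply by $\chi_{Q_{k}}$ for the chain $Q_k\uparrow\mathbb R^n$ from Remark~\ref{rem}, with $k=k(j)$ chosen large. This step is the main obstacle, since multiplying by an indicator does not preserve the sharp function in general. The saving point is that $Q_k\in\mathscr D$: any dyadic cube $Q\in\mathscr D$ is either contained in $Q_k$, disjoint from it, or contains it. In the first two cases $g\chi_{Q_k}$ restricted to $Q$ is $g$ or $0$. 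In the third case we estimate
$$
\frac1{|Q|}\int_Q|g\chi_{Q_k}-(g\chi_{Q_k})_Q|\le \frac2{|Q|}\int_Q|g\chi_{Q_k}| \le \frac{2}{|Q|}\int_{\{|f|>1/j\}}|f|,
$$
and this is not controlled pointwise by $f^\#_{\mathscr D}$. So we avoid a second truncation and instead choose the order of operations differently. We take $f_j:=\psi_{1/j}(f\chi_{Q_j})$? That loses the oscillation for the same reason.

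The better route is therefore to use the $1$-Lipschitz property only: for any $1$-Lipschitz $\psi$ with $\psi(0)=0$ and any cube $Q$, we have
$$
\frac1{|Q|}\int_Q|\psi(f)-\psi(f)_Q|\le \frac{2}{|Q|}\int_Q|\psi(f)-\psi(f_Q)|\le \frac{2}{|Q|}\int_Q|f-f_Q|,
$$
which gives \eqref{second} with constant $2$ for $f_j=\psi_{1/j}(f)$ directly. Pointwise convergence \eqref{first} holds because $|\psi_{1/j}(t)-t|\le 1/j$.

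Compactness of the support is then the remaining issue. I expect that either $\{|f|>\lambda\}$ may be taken bounded up to modification, or, more likely, that the intended statement only needs finite-measure support. If genuinely compact support is required, I would try combining the range cutoff with a dyadic cutoff at a cube $Q_k$ chosen so that $\int_{\{|f|>1/j\}\setminus Q_k}|f|$ is tiny, and absorb the error in cubes $Q\supseteq Q_k$. On such cubes the mean oscillation of $f$ itself is bounded below by comparison, using that $f_j$ is supported on a set of small density in large $Q$. Making this precise without enlarging the constant beyond $2$ is the step I expect to be hardest.
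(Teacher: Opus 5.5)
Your soft threshold $\psi_{1/j}(f)$ does several things correctly. It is bounded, it converges to $f$ pointwise, and by your $1$-Lipschitz argument it satisfies $(\psi_{1/j}(f))^{\#}_{\mathscr D}\le 2f^{\#}_{\mathscr D}$. The gap is that its support $\{|f|>1/j\}$ only has finite measure. The compact support required by the statement is never established: your last paragraph describes a hope rather than an argument. This requirement cannot be waived. In the proof of Theorem~\ref{thm:main}, the lemma is used to reduce to functions with $\operatorname{supp} f\subset Q_0$ for some $Q_0\in\mathscr D$ with $|\operatorname{supp} f|\le\frac12|Q_0|$, and finite-measure support does not provide such a $Q_0$. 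You correctly observe that multiplying by $\chi_{Q_k}$ can create oscillation on cubes $R\supsetneq Q_k$. However, the comparison you hope for on those cubes is never turned into an actual inequality.

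The paper's missing idea is to subtract the mean before cutting. Take $Q_j\in\mathscr D$ with $Q_j\uparrow\mathbb R^n$ and set $f_j:=(f-f_{Q_j})\chi_{Q_j}$. Cubes $R\subseteq Q_j$ see only a constant shift, and disjoint cubes see $0$. For $Q_j\subsetneq R$, the mean-zero property gives $(f_j)_R=0$, hence $\frac1{|R|}\int_R|f_j-(f_j)_R|=\frac1{|R|}\int_{Q_j}|f-f_{Q_j}|\le\frac2{|R|}\int_{Q_j}|f-f_R|\le 2\,\frac1{|R|}\int_R|f-f_R|$. Pointwise convergence follows from $f_{Q_j}\to0$. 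This is where $f\in S_0\cap L^\infty$ enters, via $|f_{Q_j}|\le\delta+\|f\|_\infty|\{|f|>\delta\}|/|Q_j|$.

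Your own route can also be closed, but only with an inequality your proposal does not contain. Choose $k=k(j)\ge j$ so large that $|f_R|\le 1/j$ for every $R\in\mathscr D$ with $R\supseteq Q_k$; the same $S_0\cap L^\infty$ estimate allows this. Then on $\{|f|>1/j\}$ we have $|\psi_{1/j}(f)|=|f|-1/j\le|f|-|f_R|\le|f-f_R|$, and $\psi_{1/j}(f)=0$ elsewhere. Hence $\frac1{|R|}\int_R|\psi_{1/j}(f)\chi_{Q_k}|\le\frac1{|R|}\int_R|f-f_R|$ for $R\supsetneq Q_k$, so $\psi_{1/j}(f)\chi_{Q_k}$ works with constant $2$.
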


\begin{proof} By Remark \ref{rem}, there exists an increasing sequence of cubes $Q_j\in {\mathscr D}$ such that $Q_j\uparrow~{\mathbb R}^n$. Define
$$f_j:=(f-f_{Q_j})\chi_{Q_j}.$$

Let us show that
\begin{equation}\label{Qj}
\lim_{j\to \infty}f_{Q_j}=0.
\end{equation}
Given $\d>0$,
$$|f_{Q_j}|\le\delta+\|f\|_\infty\frac{|\{x:|f(x)|>\delta\}|}{|Q_j|}.$$
Since $f\in S_0$, the numerator in the second term is finite, whereas
$|Q_j|\to\infty$.  Thus the second term tends to zero, and then
$\d\downarrow0$ proves (\ref{Qj}).
Since every point of ${\mathbb R}^n$ eventually belongs to some $Q_j$, (\ref{first}) follows from (\ref{Qj}).

To prove (\ref{second}), fix a cube $R\in {\mathscr D}$ and a point $x\in R$. Consider
$$\O(f_j;R):=\frac1{|R|}\int_R|f_j-(f_j)_R|.$$
If $R\cap Q_j=\emptyset$, then $\O(f_j;R)=0$. Otherwise, there are two cases. If $R\subset Q_j$, then on $R$ we have $f_j=f-f_{Q_j}$, and therefore
$$
\O(f_j;R)=\O(f;R).
$$
If $Q_j\subsetneq R$, then $(f_j)_R=0$, and hence
$$\O(f_j;R)=\frac{1}{|R|}\int_{Q_j}|f-f_{Q_j}|\le \frac{2}{|R|}\int_{Q_j}|f-f_{R}|\le 2\O(f;R).$$
Taking the supremum over all $R\ni x, R\in {\mathscr D}$ proves (\ref{second}).
\end{proof}

\section{Proof of Theorem \ref{thm:main}}
By the standard convexity--concavity renorming theorem, see, e.g.,
\cite[Theorem 1.d.8]{LT79}, we may replace the norm of $X$ by an equivalent
lattice norm for which the $s$-concavity constant is one.  Boundedness of
$M$ and the desired conclusion are preserved under an equivalent norm.
Thus, throughout the proof, we assume
$$
\big(\|h_1\|_X^s+\|h_2\|_X^s\big)^{1/s}\le \left\|\big(|h_1|^s+|h_2|^s\big)^{1/s}\right\|_X.
$$

Fix a dyadic lattice ${\mathscr D}$, and define the dyadic maximal operator
$$M^{\mathscr D}f(x):=\sup_{Q\ni x, Q\in {\mathscr D}}\frac{1}{|Q|}\int_Q|f|.$$
Denote $A:=\|M^{\mathscr D}\|_{X\to X}$. Then $1\le A<\infty$.

The following statement is an important ingredient of the proof, and the only place where the $s$-concavity of $X$ is used.

\begin{lemma}\label{lem:pruning}
Let $\mathcal F\subset {\mathscr D}$, and let $0<\e<1$. To every
$Q\in\mathcal F$ associate a number $c_Q>0$ and a measurable set
$B_Q\subset Q$ such that $|B_Q|\le\e|Q|.$
Set
$$U:=\sum_{Q\in\mathcal F}c_Q\chi_Q,\qquad V:=\sum_{Q\in\mathcal F}c_Q\chi_{B_Q}.$$
Assume that the coefficients $c_Q$ are $L$-lacunary along nested cubes:
$$Q'\subsetneq Q \Rightarrow c_{Q'}\ge Lc_Q,\quad L>1.$$
Then
$$\|V\|_X\le\rho\|U\|_X,$$
where
$$\rho:=\left[1-\left( \frac{(1-\varepsilon)(L-1)}{LA}\right)^s\right]^{1/s}<1.$$
\end{lemma}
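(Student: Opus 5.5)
The plan is to compare $V$ with the complementary function
$$
W:=U-V=\sum_{Q\in\mathcal F}c_Q\chi_{Q\setminus B_Q}\ \ge 0,
$$
and to use the normalized $s$-concavity (constant one, two functions) on the pair $V,W$. The inequality will then come from two separate facts: a pointwise inequality that feeds into concavity, and a lower bound for $\|W\|_X$ in terms of $\|U\|_X$ obtained from the boundedness of $M^{\mathscr D}$.

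\emph{Step 1: concavity.} Since $0\le V\le U$ pointwise and $s>1$, we have
$$
V^s+W^s=V^s+(U-V)^s\le \bigl(V+(U-V)\bigr)^s=U^s.
$$
This is the elementary inequality $a^s+b^s\le(a+b)^s$ for $a,b\ge0$. Combined with the ideal property and the normalized $s$-concavity, it gives
$$
\|V\|_X^s+\|W\|_X^s\le\bigl\|(V^s+W^s)^{1/s}\bigr\|_X\le\|U\|_X .
$$
It therefore suffices to show $\|W\|_X\ge\kappa\|U\|_X$ with $\kappa:=\frac{(1-\e)(L-1)}{LA}$. Then $\|V\|_X^s\le(1-\kappa^s)\|U\|_X^s$, which is exactly $\|V\|_X\le\rho\|U\|_X$. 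Note that $\kappa<1$ automatically, so $\rho<1$.

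\emph{Step 2: $U$ is controlled by $M^{\mathscr D}W$.} Fix $x$. The cubes of $\mathcal F$ containing $x$ form a chain in $\mathscr D$. By lacunarity, $c_Q$ increases at least geometrically (ratio $L$) as $Q$ shrinks along this chain. Hence
$$
U(x)\le \frac{L}{L-1}\sup_{Q\in\mathcal F,\,Q\ni x}c_Q .
$$
When the chain has a smallest cube, this is a geometric series summed from the top term. When it has no smallest cube, both sides are $+\infty$.

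Next, for each $Q\in\mathcal F$ we have $|Q\setminus B_Q|\ge(1-\e)|Q|$ and $W\ge c_Q\chi_{Q\setminus B_Q}$. Therefore, for $x\in Q$,
$$
M^{\mathscr D}W(x)\ge\frac1{|Q|}\int_Q W\ge(1-\e)c_Q .
$$
Taking the supremum over $Q\ni x$ yields
$$
U\le\frac{L}{(L-1)(1-\e)}M^{\mathscr D}W,
$$
and so $\|U\|_X\le\kappa^{-1}\|W\|_X$ by the definition of $A$.

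\emph{Main obstacle.} The main technical point I expect is making sure the norms involved are finite, so that subtracting in Step 1 is legitimate, especially when $\mathcal F$ is infinite. If $\|U\|_X=\infty$ there is nothing to prove. Otherwise $W\le U$ lies in $X$, and everything above is finite. I would nonetheless first run the argument for finite subfamilies $\mathcal F_N\uparrow\mathcal F$. Lacunarity is inherited by subfamilies, and the measure condition on $B_Q$ involves only single cubes, so the hypotheses hold for each $\mathcal F_N$. The corresponding functions satisfy $V_N\uparrow V$ and $U_N\uparrow U$, and the Fatou property then passes the inequality to the limit. This also sidesteps any issue with infinite chains in Step 2.
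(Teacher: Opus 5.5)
Your proof is correct and is essentially the paper's argument. It uses the same complementary function $W=U-V$, the same lower bound $M^{\mathscr D}W\ge \frac{(1-\e)(L-1)}{L}U$ from the lacunary geometric sum, and the same normalized $s$-concavity step with $(W^s+V^s)^{1/s}\le W+V$; your extra care about infinite chains and exhaustion by finite subfamilies is a harmless refinement the paper leaves implicit (also, in the first display of Step 1 the left-hand side should read $\bigl(\|V\|_X^s+\|W\|_X^s\bigr)^{1/s}$, as your next line correctly uses).
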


\begin{proof} Put
$$
W:=U-V=\sum_{Q\in\mathcal F}c_Q\chi_{Q\setminus B_Q}.
$$
Fix $Q\in\mathcal F$.  For every $x\in Q$,
$$
M^{\mathscr D}W(x)\ge\frac1{|Q|}\int_Q W\ge (1-\e)c_Q.
$$
Consequently,
\begin{equation}\label{in1}
M^{\mathscr D}W\ge(1-\e)\sup_{Q\in\mathcal F}c_Q\chi_Q.
\end{equation}

At a fixed point $x$, all dyadic cubes from $\mathcal F$ containing
$x$ form a chain, and, by the assumption, their coefficients grow geometrically as
the cubes decrease. Therefore
\begin{equation}\label{in2}
U(x)\le\frac{L}{L-1}\sup_{Q\in\mathcal F}c_Q\chi_Q(x).
\end{equation}
Indeed, if
$Q_0\supsetneq Q_1\supsetneq\cdots\supsetneq Q_m$ are the cubes
containing $x$, then
$$
c_{Q_k}\le L^{-(m-k)}c_{Q_m},
$$
and hence
$$\sum_{k=0}^m c_{Q_k}\le c_{Q_m}\sum_{r=0}^m L^{-r}\le\frac{L}{L-1}c_{Q_m},$$
from which (\ref{in2}) follows.

By (\ref{in1}) and (\ref{in2}),
$$M^{\mathscr D}W\ge\eta U,\qquad \eta:=\frac{(1-\varepsilon)(L-1)}{L}.$$
Thus
\begin{equation}\label{in3}
\|W\|_X\ge\frac{\eta}{A}\|U\|_X.
\end{equation}

Since $U=W+V$ and $W,V\ge0$, the normalized $s$-concavity gives
$$\bigl(\|W\|_X^s+\|V\|_X^s\bigr)^{1/s}\le\left\|\bigl(W^s+V^s\bigr)^{1/s}\right\|_X \le\|W+V\|_X=\|U\|_X.$$
Combining this estimate with (\ref{in3}) completes the proof.
\end{proof}

\begin{proof}[Proof of Theorem \ref{thm:main}] Let $Q_0\in {\mathscr D}$. Define the local dyadic sharp function
$$f^{\#,d}_{Q_0}(x)=\sup_{\substack{Q\in{\mathcal D}(Q_0)\\x\in Q}}\frac{1}{|Q|}\int_Q|f-f_Q|.$$
We will prove that there is a constant $C>0$ such that for any non-negative $f\in L(Q_0)$,
\begin{equation}\label{local}
\|M^d_{Q_0}f\|_{X}\le C\left(\|f^{\#,d}_{Q_0}\|_{X}+f_{Q_0}\|\chi_{Q_0}\|_{X}\right).
\end{equation}

Let us first show that, once (\ref{local}) is established, the theorem follows easily. Indeed, assume first that
$f\ge0$ is bounded and compactly supported. Take a cube $Q_0\in {\mathscr D}$ such that $\text{supp}\,f\subset Q_0$
and $|\text{supp}\,f|\le \frac{1}{2}|Q_0|$. Then, for every $x\in Q_0$,
$$f_{Q_0}\le 2\frac{1}{|Q_0|}\int_{Q_0}|f-f_{Q_0}|\le 2f_{\mathscr D}^{\#}(x).$$
Since $f\le M^d_{Q_0}f$ almost everywhere, we obtain from (\ref{local}) that
\begin{equation}\label{ofs}
\|f\|_{X}\lesssim \|f_{\mathscr D}^{\#}\|_{X}\le \|f^{\#}\|_{X}.
\end{equation}

Next, the assumption that $f\ge 0$ is easily removed since $|f|_{\mathscr D}^{\#}\le 2f_{\mathscr D}^{\#}$. Further, (\ref{ofs})
is extended to any bounded function from $S_0({\mathbb R}^n)$ by Lemma \ref{appr} and the Fatou property of~$X$. Finally, in order
to extend (\ref{ofs}) to any $f\in S_0({\mathbb R}^n)$, define $f_N:=\min(f,N)$. Then $f_N\in S_0\cap L^{\infty}$. Using that the mapping $t\mapsto\min(t,N)$ is
$1$-Lipschitz, we have $(f_N)_{\mathscr D}^{\#}\le 2f_{\mathscr D}^{\#}$. Hence, applying first (\ref{ofs}) to $f_N$
and then letting $N\to \infty$, the Fatou property of $X$ implies (\ref{ofs}) for any $f\in S_0({\mathbb R}^n)$. This completes the proof.

It remains to establish (\ref{local}).
Fix numbers $\tau>1$ and $\ga>0$ which will be chosen later. Let ${\mathcal S}$ be a family from Lemma \ref{dyad}.

If $P\in\text{ch}_{\mathcal S}(Q)$ we call $P$ a \emph{continuation child} when
$$
\inf_{x\in P} f^{\#,d}_{Q_0}(x)\le \ga f_Q,
$$
and a \emph{restart child} otherwise.

Let $\widehat P$ be the dyadic parent of $P$ in ${\mathcal D}(Q_0)$. By maximality of $P$,
$f_{\widehat P}\le\tau f_Q$. Hence
$$f_P \le f_{\widehat P}+\frac1{|P|}\int_P|f-f_{\widehat P}|\le \tau f_Q+2^n\inf_Pf^{\#,d}_{Q_0}.$$
From this, if $P$ is a continuation child, then
\begin{equation}\label{contpr}
f_P\le (\tau+2^n\gamma)f_Q,
\end{equation}
and, if $P$ is a restart child, then
\begin{equation}\label{restpr}
f_P\le \left(\frac{\tau}{\ga}+2^n\right)\inf_Pf^{\#,d}_{Q_0}.
\end{equation}

Denote
$$
B_Q:=\bigcup_{\substack{P\in\text{ch}_{\mathcal S}(Q)\\P\text{ continuation}}}P,
$$
and let us show that
\begin{equation}\label{BQ}
|B_Q|\le \frac{\ga}{\tau-1}|Q|.
\end{equation}
Indeed, if $B_Q=\emptyset$, this is trivial. Otherwise, for every $P\in\text{ch}_{\mathcal S}(Q)$,
$$(\tau-1)f_Q<f_P-f_Q\le \frac{1}{|P|}\int_P|f-f_Q|.$$
Therefore,
$$(\tau-1)f_Q|B_Q|\le \int_Q|f-f_Q|\le |Q|\inf_Qf^{\#,d}_{Q_0}\le \ga|Q|f_Q,$$
which proves (\ref{BQ}).

Using the notion of continuation and restart cubes, we perform a corona decomposition of ${\mathcal S}$ as follows.
A cube is called a \emph{corona root} if it is either $Q_0$ or a
restart cube.  Denote the family of roots by ${\mathcal R}$.

For each restart cube $R$, let $\mathcal C(R)$ consist of $R$ together with all cubes $Q\in\mathcal S, Q\subset R$ that can be connected to $R$ by a chain of continuation children. Equivalently, starting from $R$, one follows only continuation children and stops whenever a restart cube is encountered.
The coronas ${\mathcal C}(R)$ form a disjoint partition of the stopping family ${\mathcal S}$:
$$
{\mathcal S}=\bigcup_{R\in{\mathcal R}}{\mathcal C}(R).
$$

\begin{figure}[ht]
\centering
\begin{tikzpicture}[
    x=0.34cm,
    y=0.95cm,
    interval/.style={line width=2.3pt,line cap=round},
    restart/.style={interval,draw=restartcolor},
    continuation/.style={interval,draw=continuationcolor},
    lab/.style={font=\small}
]

% The selected intervals all belong to one dyadic grid, but they do not
% exhaust their parent intervals and may have different dyadic lengths.

% Root.
\draw[restart] (0,0) -- (32,0)
    node[midway,above=3pt,lab,text=restartcolor] {$R$};

% First stopping generation: selected dyadic subintervals with a gap.
\draw[continuation] (0,-0.82) -- (8,-0.82)
    node[midway,above=3pt,lab,text=continuationcolor] {$Q_1$};
\draw[restart] (16,-0.82) -- (24,-0.82)
    node[midway,above=3pt,lab,text=restartcolor] {$R'$};

% Second stopping generation: different dyadic lengths and further gaps.
\draw[continuation] (0,-1.64) -- (2,-1.64)
    node[midway,above=3pt,lab,text=continuationcolor] {$Q_2$};
\draw[restart] (4,-1.64) -- (8,-1.64)
    node[midway,above=3pt,lab,text=restartcolor] {$R''$};
\draw[continuation] (16,-1.64) -- (20,-1.64)
    node[midway,above=3pt,lab,text=continuationcolor] {$Q_3$};
\draw[continuation] (22,-1.64) -- (24,-1.64)
    node[midway,above=3pt,lab,text=continuationcolor] {$Q_4$};

% Third stopping generation.
\draw[continuation] (0,-2.46) -- (1,-2.46)
    node[midway,above=3pt,lab,text=continuationcolor] {$Q_5$};
\draw[continuation] (16,-2.46) -- (17,-2.46)
    node[midway,above=3pt,lab,text=continuationcolor] {$Q_6$};
\draw[restart] (18,-2.46) -- (20,-2.46)
    node[midway,above=3pt,lab,text=restartcolor] {$R'''$};

% Legend.
\draw[restart] (4.0,-3.18) -- (6.3,-3.18);
\node[lab,anchor=west] at (6.7,-3.18) {restart cube};

\draw[continuation] (17.0,-3.18) -- (19.3,-3.18);
\node[lab,anchor=west] at (19.7,-3.18) {continuation cube};

\end{tikzpicture}
\caption{A stopping tree with restart cubes shown in red and continuation cubes shown in blue. The corona rooted at $R$ is
$\mathcal C(R)=\{R,Q_1,Q_2,Q_5\}.$
A restart cube does not belong to $\mathcal C(R)$ unless it is the root $R$. Thus the branches beginning at $R'$, $R''$, and $R'''$ are excluded from $\mathcal C(R)$, together with all cubes lying below them.}
\end{figure}

Write
$$
 \mathcal C(R)=\bigcup_{m\ge0}\mathcal C_m(R),
$$
where $\mathcal C_0(R)=\{R\}$ and $\mathcal C_m(R)$ is the $m$-th
continuation generation below $R$.

Let $\nu(Q)$ be the stopping generation of $Q$, so that $\nu(Q_0)=0$
and every stopping child increases $\nu$ by one. For $K\in {\mathbb N}$, which will be chosen later, define
$${\mathcal R}_j:=\{R\in{\mathcal R}:\nu(R)\equiv j\pmod K\},
 \qquad j=0,\dots,K-1.
$$
Then we have
$$
{\mathcal S}=\bigcup_{j=0}^{K-1}\bigcup_{R\in {\mathcal R}_j}\bigcup_{m\ge 0}\mathcal C_m(R).
$$

For $j=0,\dots,K-1$ and $m\ge0$, define
$$F_{j,m}:=\sum_{R\in {\mathcal R}_j}\sum_{Q\in\mathcal C_m(R)}f_Q\chi_{E_Q},\quad U_{j,m}:=\sum_{R\in {\mathcal R}_j}\sum_{Q\in\mathcal C_m(R)}f_R\chi_{Q}.$$
We have
\begin{equation}\label{split}
\sum_{Q\in{\mathcal S}}f_Q\chi_{E_Q}=\sum_{j=0}^{K-1}\sum_{m\ge0}F_{j,m}.
\end{equation}

If $Q\in \mathcal C_m(R)$, then applying (\ref{contpr}) subsequently $m$ times yields
$$f_Q\le (\tau+2^n\gamma)^mf_R.$$
Hence
\begin{equation}\label{Fj}
F_{j,m}\le (\tau+2^n\gamma)^mU_{j,m}.
\end{equation}

Now, in order to handle $U_{j,m}$ we will use Lemma \ref{lem:pruning}. First observe that, by the definition of the continuation generations,
$$U_{j,m+1}=\sum_{R\in {\mathcal R}_j}\sum_{Q\in\mathcal C_m(R)}f_R\chi_{B_Q}.$$
Indeed, $B_Q$ is exactly the disjoint union of the continuation children of $Q$.
We apply Lemma \ref{lem:pruning} to the family
$$\mathcal F_{j,m}:=\bigcup_{R\in{\mathcal R}_j}\mathcal C_m(R),$$
assigning to $Q\in\mathcal C_m(R)$ the coefficient $c_Q=f_R$. Let us check the lacunarity condition.
Suppose that $Q',Q\in {\mathcal F}_{j,m}$ and $Q'\subsetneq~Q$.
Observe that two strictly nested cubes cannot belong to the same family $\mathcal C_m(R)$. Therefore, there exist $R,R'\in {\mathcal R}_j$ such that $R\not=R'$ and $Q'\in {\mathcal C}(R'), Q\in {\mathcal C}(R)$.
It follows from this that $R'\subsetneq R$. Since both $R, R'\in {\mathcal R}_j$,
$$\nu(R')-\nu(R)\ge K,$$
which implies
$$c_{Q'}=f_{R'}>\tau^Kf_R=\tau^Kc_Q.$$
Therefore, by Lemma \ref{lem:pruning} along with (\ref{BQ}),
$$\|U_{j,m+1}\|_X\le\rho\|U_{j,m}\|_X,$$
where
$$\rho=\left[1-\left( \frac{(1-\e)(L-1)}{LA}\right)^s\right]^{1/s}\quad\Big(\e:=\frac{\ga}{\tau-1}, L:=\tau^K\Big).$$

By iteration,
$$\|U_{j,m}\|_X\le\rho^m\|U_{j,0}\|_X,$$
which, along with (\ref{Fj}), implies
\begin{equation}\label{Fjm}
\|F_{j,m}\|_X \le\big((\tau+2^n\gamma)\rho\big)^m\|U_{j,0}\|_X.
\end{equation}

Let us now fix the parameters $\tau,\ga$ and $K$. Denote
$$\rho_0:=\big(1-A^{-s}\big)^{1/s}<1.$$
We have $\rho\to \rho_0$ as $\e\to 0$ and $L\to \infty$. Hence, taking $\tau>1$ and sufficiently close to $1$, then $\ga>0$ small enough and $K$ large enough, we obtain that
$\nu:=(\tau+2^n\gamma)\rho<1$.

Combining Lemma \ref{dyad} with (\ref{split}) and (\ref{Fjm}), we obtain
\begin{equation}\label{estM}
\|M^d_{Q_0}f\|_{X}\le \tau\sum_{j=0}^{K-1}\sum_{m\ge 0}\|F_{j,m}\|_{X}\le\frac{\tau}{1-\nu}\sum_{j=0}^{K-1}\|U_{j,0}\|_{X}.
\end{equation}

It remains to estimate the root function
$$
U_{j,0}=\sum_{R\in{\mathcal R}_j}f_R\chi_R.
$$
The nested coefficients in this sum are $L$-lacunary. Therefore, by (\ref{in2}),
$$
U_{j,0}(x)\le\frac{L}{L-1}\sup_{R\in{\mathcal R}_j}f_R\chi_R(x).
$$
Every root $R\ne Q_0$ is a restart cube.  Hence, by (\ref{restpr}), for every $x\in R$,
$$
f_R\le \left(\frac{\tau}{\ga}+2^n\right)f^{\#,d}_{Q_0}(x).
$$
Consequently, for every $j=1,\dots, K-1$,
$$U_{j,0}\le\frac{L}{L-1}\left(\frac{\tau}{\ga}+2^n\right)f^{\#,d}_{Q_0},$$
and, for $j=0$,
$$U_{0,0}=f_{Q_0}\chi_{Q_0}+\sum_{R\in{\mathcal R}_j,R\not=Q_0}f_R\chi_R\le f_{Q_0}\chi_{Q_0}+\frac{L}{L-1}\left(\frac{\tau}{\ga}+2^n\right)f^{\#,d}_{Q_0}.$$
From this,
$$
\sum_{j=0}^{K-1}\|U_{j,0}\|_{X}\le f_{Q_0}\|\chi_{Q_0}\|_{X}+K\frac{L}{L-1}\left(\frac{\tau}{\ga}+2^n\right)\|f^{\#,d}_{Q_0}\|_{X},
$$
which, along with (\ref{estM}), proves (\ref{local}). Therefore the proof is complete.
\end{proof}

\section{Some applications}
There is an extensive literature in which the simultaneous boundedness of the Hardy--Littlewood maximal operator on a Banach function space $X$ and on its associate $X'$ appears as a natural structural assumption. Representative examples include Rubio de Francia extrapolation on Banach function spaces \cite{CU17}, extrapolation of compactness \cite{LoN24}, compactness characterizations of commutators on ball Banach function spaces \cite{TYYZ23}, the theory of unconditional wavelet bases \cite{K21}.

Theorem \ref{thm:main} shows that, when $X$ is $s$-concave, the boundedness of $M$ on $X'$ follows automatically from its boundedness on $X$. Consequently, in each of the results mentioned above, the two maximal-operator assumptions can be replaced by the single condition $M: X\to X$, provided, of course, that the remaining hypotheses of the corresponding result are satisfied. We record below several concrete instances of this principle. We begin with two applications to Musielak--Orlicz and weighted Morrey spaces; see \cite[Section~5]{N26} for the relevant definitions and a more thorough discussion.

\subsection{Musielak--Orlicz spaces}
Let $\varphi$ be a generalized $\Phi$-function and let
$\varphi^*$ be its complementary function.  Recall that
$L^{\varphi(\cdot)}$ is $s$-concave for some $s<\infty$ if and
only if $\varphi$ satisfies the $\Delta_2$-condition; see
\cite[Theorem~5.10]{N26}. Since
$$
\bigl(L^{\varphi(\cdot)}\bigr)'=L^{\varphi^*(\cdot)}
$$
up to equivalence of norms, Theorem \ref{thm:main} implies that
$$
M:L^{\varphi(\cdot)}\to L^{\varphi(\cdot)}\quad\Rightarrow\quad M:L^{\varphi^*(\cdot)}\to L^{\varphi^*(\cdot)}
$$
whenever $\varphi$ satisfies the $\Delta_2$-condition.

In particular, if both $\varphi$ and $\varphi^*$ satisfy the
$\Delta_2$-condition, then
$$
M:L^{\varphi(\cdot)}\to L^{\varphi(\cdot)}\quad\Leftrightarrow\quad M:L^{\varphi^*(\cdot)}\to L^{\varphi^*(\cdot)}.
$$
This proves \cite[Conjecture 5.11]{N26}, which is equivalent to \cite[Question 4.3.7]{HH19}.

\subsection{Weighted Morrey and block spaces}
Let $1<p\leq q<\infty$, and let $M_w^{p,q}$ and
$B_{w^{-1}}^{p',q'}$ denote, respectively, the weighted Morrey and
block spaces of Samko type.  They are related by the K\"othe duality
$$\bigl(M_w^{p,q}\bigr)'=B_{w^{-1}}^{p',q'}.$$
Although $M_w^{p,q}$ is not $s$-concave for any finite $s$ when
$p<q$, the block space $B_{w^{-1}}^{p',q'}$ is $p'$-concave.
Applying Theorem~\ref{thm:main} to the block space gives
$$
M:B_{w^{-1}}^{p',q'}\to B_{w^{-1}}^{p',q'}
\Rightarrow
M:M_w^{p,q}\to M_w^{p,q}.
$$
This proves \cite[Conjecture 5.17]{N26}.  It is worth
pointing out that the converse implication is false in general when
$p<q$, see \cite[Example 5.18]{N26}.

\subsection{Rubio de Francia extrapolation}
Let $\mathcal F$ be a family of pairs of nonnegative measurable
functions, and suppose that, for some $1\leq p_0<\infty$,
\begin{equation}
\label{rdfcond}
\int_{\mathbb R^n} f^{p_0}w
\lesssim
\int_{\mathbb R^n} g^{p_0}w
\end{equation}
for every $w\in A_{p_0}$ and every $(f,g)\in\mathcal F$, where
the implicit constant may depend on $[w]_{A_{p_0}}$, but not on
the particular weight $w$ or the pair $(f,g)$.
The extrapolation theorem for Banach function spaces
\cite[Theorem~10.1]{CU17} asserts that
$$
\|f\|_X\lesssim \|g\|_X,
\qquad (f,g)\in\mathcal F,
$$
provided that the Hardy--Littlewood maximal operator is bounded on
both $X$ and $X'$.

If $X$ is $s$-concave, Theorem~\ref{thm:main} shows that the
assumption on $X'$ is automatic. Hence, in this setting, the two
maximal-operator assumptions in the Banach function space
extrapolation theorem reduce to the single condition $M:X\to X$.

In particular, fix $1<q<\infty$, and let $\mathcal F_q$ be the
family of pairs
$$
\left(
\left(\sum_j (Mf_j)^q\right)^{1/q},
\left(\sum_j |f_j|^q\right)^{1/q}
\right),
$$
where the sums are finite. By the weighted vector-valued maximal
inequality \cite[Theorem~6.1]{CU17}, the family $\mathcal F_q$
satisfies \eqref{rdfcond} for every $1<p_0<\infty$. Therefore, if
$M$ is bounded on an $s$-concave Banach function space $X$, then
$$
\left\|
\left(\sum_j (Mf_j)^q\right)^{1/q}
\right\|_X
\lesssim
\left\|
\left(\sum_j |f_j|^q\right)^{1/q}
\right\|_X.
$$
Thus, on an $s$-concave Banach function space, scalar boundedness
of the maximal operator automatically upgrades to its
$\ell^q$-valued boundedness for every $1<q<\infty$.

\subsection{Extrapolation of compactness}
A similar consequence concerns extrapolation of compactness.  Let $T$
be a linear operator which is bounded on $L^{p_0}(v)$ for some
$1<p_0<\infty$ and every $v\in A_{p_0}$, and suppose that $T$ is
compact on $L^{p_1}(v_1)$ for some $1<p_1<\infty$ and some
$v_1\in A_{p_1}$.  By the extrapolation-of-compactness theorem of
Lorist and Nieraeth \cite{LoN24}, $T$ is compact
on every $r$-convex and $s$-concave Banach function space $X$,
$1<r<s<\infty$, for which $M$ is bounded on both $X$ and $X'$.
Theorem~\ref{thm:main} therefore yields the following simplified
implication:
$$
\begin{gathered}
        X\text{ is \(r\)-convex and \(s\)-concave for some }
        1<r<s<\infty,
\\
        M:X\to X
        \qquad\Longrightarrow\qquad
        T:X\to X\text{ is compact}.
\end{gathered}
$$

As a concrete instance, let $T$ be a Calder\'on--Zygmund operator
with a Dini-continuous kernel and let $b\in\mathrm{CMO}$.  Then, for
every $r$-convex and $s$-concave Banach function space~$X$,
$1<r<s<\infty$, such that $M:X\to X$, the commutator
$$
        [b,T]f=bTf-T(bf)
$$
is compact on $X$; see
\cite[Theorem~6.1]{LoN24}.

\section{Appendix}
Our proof of Conjecture~\ref{conj:Nieraeth} proceeds through the Fefferman--Stein inequality on $X$, using the characterization given by Theorem~\ref{fschar}, which was proved in \cite{L10}. In order to make the paper essentially self-contained, we now outline a direct proof of Conjecture~\ref{conj:Nieraeth} without appealing to that theorem.

Assume that $M$ is bounded on an $s$-concave Banach function space $X$. Let us show that $M$ is bounded on $X'$.
We proved in \eqref{ofs} that, for every dyadic lattice $\mathscr D$, the dyadic Fefferman--Stein inequality
\begin{equation}\label{dfs}
\|f\|_{X}\lesssim \|f_{\mathscr D}^{\#}\|_{X}
\end{equation}
holds for all $f\in S_0(\mathbb R^n)$.

By the three lattice theorem (see, for instance, \cite{LN19}), there exist $3^n$ dyadic lattices $\mathscr D_j$ such that
$$
Mf\lesssim \sum_{j=1}^{3^n}M^{\mathscr D_j}f.
$$
Consequently, it is enough to prove that $M^{\mathscr D}$ is bounded on $X'$, uniformly over all dyadic lattices $\mathscr D$.

Fix such a lattice $\mathscr D$. We may assume that $f\ge 0$. By the standard sparse linearization of the dyadic maximal operator (which can be proved as Lemma \ref{dyad}), there exists a sparse family
${\mathcal S}\subset{\mathscr D}$, with associated pairwise disjoint sets $E_Q\subset Q$, such that
$$
M^{\mathscr D}f
\lesssim
\sum_{Q\in\mathcal S}f_Q\chi_{E_Q}
=:T_{\mathcal S}f.
$$
It therefore suffices to prove that $T_{\mathcal S}$ is bounded on $X'$, uniformly in $\mathcal S$. By duality, this is equivalent to the uniform boundedness on $X$ of its adjoint
$$T_{\mathcal S}^{\star}g=\sum_{Q\in\mathcal S}\left(\frac1{|Q|}\int_{E_Q}g\right)\chi_Q.$$

We shall apply \eqref{dfs} to $T_{\mathcal S}^{\star}g$. Fix $R\in\mathscr D$. For $y\in R$, the dyadic nesting property gives
$$T_{\mathcal S}^{\star}g(y)=
\sum_{\substack{Q\in\mathcal S,Q\subsetneq R}}
\left(\frac1{|Q|}\int_{E_Q}g\right)\chi_Q(y)
+
c_R,
$$
where
$$
c_R
:=
\sum_{\substack{Q\in\mathcal S,R\subseteq Q}}
\frac1{|Q|}\int_{E_Q}g
$$
is independent of $y\in R$. Hence
\begin{eqnarray*}
\frac{1}{|R|}\int_R|T_{\mathcal S}^{\star}g-(T_{\mathcal S}^{\star}g)_R|&\le& \frac{2}{|R|}\int_R|T_{\mathcal S}^{\star}g-c_R|\\
&\le&\frac{2}{|R|}\|T_{\mathcal S}^{\star}(g\chi_R)\|_{L^1}\le \frac{2}{|R|}\int_Rg.
\end{eqnarray*}
In the last step we used the pairwise disjointness of the sets $E_Q$. From this
$$
(T_{\mathcal S}^{\star}g)^{\#}_{\mathscr D}(x)\le 2Mg(x).
$$
Therefore, by \eqref{dfs} and the boundedness of $M$ on $X$,
$$
\|T_{\mathcal S}^{\star}g\|_{X}\lesssim \|(T_{\mathcal S}^{\star}g)^{\#}_{\mathscr D}\|_{X}\lesssim \|Mg\|_{X}\lesssim \|g\|_{X},
$$
and this completes the proof.

\end{document}